\documentclass[11pt]{amsart}

\textwidth 16.00cm
\textheight 20cm
\topmargin 0.0cm
\oddsidemargin 0.0cm

\evensidemargin 0.0cm
\parskip 0.0cm

\usepackage{amssymb}
\usepackage{mathrsfs}
\usepackage{amsfonts}
\usepackage{latexsym,amsmath,amsthm,amssymb,amsfonts}
\usepackage[usenames]{color}
\usepackage{xspace,colortbl}
\usepackage{graphicx}

\newcommand{\be}{\begin{equation}}
\newcommand{\ee}{\end{equation}}
\newcommand{\beq}{\begin{eqnarray}}
\newcommand{\eeq}{\end{eqnarray}}

\newtheorem{prop}{Proposition}[section]

\newtheorem{remark}[prop]{Remark}

\def\begeq{\begin{equation}}
\def\endeq{\end{equation}}

\def\tr{{\rm tr}}

\def\odot{\setbox0=\hbox{$\bigcirc$}\relax \mathbin {\hbox
to0pt{\raise.5pt\hbox to\wd0{\hfil $\wedge$\hfil}\hss}\box0 }}

\numberwithin{equation} {section}

\numberwithin{equation}{section}
\textheight=8.8in
\textwidth=6.28in
\topmargin=0mm
\oddsidemargin=0mm
\evensidemargin=0mm

\newtheorem{theorem}{\bf Theorem}[section]
\newtheorem{proposition}[theorem]{\bf Proposition}

\newtheorem{lemma}[theorem]{\bf Lemma}

\allowdisplaybreaks

\begin{document}

\title[$L_p$-Christoffel-Minkowski problem]
{Uniqueness of solutions to $L_p$-Christoffel-Minkowski problem for $p<1$}

\author{
Li Chen}
\address{
Faculty of Mathematics and Statistics, Hubei Key Laboratory of
Applied Mathematics, Hubei University,  Wuhan 430062, P.R. China}
\email{chenli@hubu.edu.cn}


\date{}
\begin{abstract}
$L_p$-Christoffel-Minkowski problem arises naturally in the
$L_p$-Brunn-Minkowski theory. It connects both curvature measures
and area measures of convex bodies and is a fundamental problem in
convex geometric analysis. Since the lack of Firey's extension of
Brunn-Minkowski inequality and constant rank theorem for $p<1$, the
existence and uniqueness of $L_p$-Brunn-Minkowski problem are
difficult problems. In this paper, we prove a uniqueness theorem for
solutions to $L_p$-Christoffel-Minkowski problem with $p<1$ and
constant prescribed data. Our proof is motivated by the idea of
Brendle-Choi-Daskaspoulos's work on asymptotic behavior of flows by
powers of the Gaussian curvature. One of the highlights of our
arguments is that we introduce a new auxiliary function $Z$ which is
the key to our proof.
\end{abstract}

\maketitle {\it \small{{\bf Keywords}: $L_p$-Christoffel-Minkowski
problem, uniqueness, convex solutions.}

{{\bf MSC}: Primary 35J15, Secondary
35J60.}
}

\section{Introduction}

Convex geometry plays an important role in the development of fully
nonlinear partial differential equations. The classical Minkowski
problem and the Christoffel-Minkowski problem in general, are
beautiful examples of such interactions. The classical core of
convex geometry is the Brunn-Minkowski theory, the Minkowski sum,
the mixed volumes. The $L_p$-Brunn-Minkowski theory is an extension
of the classical Brunn-Minkowski theory. The roots of the
$L_p$-Brunn-Minkowski theory date back to the middle of the
twentieth century, but its active development had to await the
emergence of the concept of $L_p$-surface area measure in
\cite{Lu93} in the early 1990's.

Let $\mathcal{K}_{0}^{n+1}$ denote the class of convex bodies
(compact convex subsets) in $n+1$-dimensional Euclidean space
$\mathbb{R}^{n+1}$ that contain the origin in their interiors and
$h_{K}: \mathbb{S}^n\rightarrow \mathbb{R}$ be the support function
of a compact convex subset $K \in \mathbb{R}^{n+1}$, which
determines $K$ uniquely and is defined by $h_{K}(x)=\max\{x\cdot X:
X \in K\}$ for $x \in \mathbb{S}^n$, where $x\cdot X$ is the
standard inner product of $x$ and $X$ in $\mathbb{R}^{n+1}$. For
each real $p\geq 1$, Firey \cite{Fi} defined what has become known
as the Minkowski-Firey $L_p$-combination $K+_p t\cdot L$ for $K, L
\in \mathcal{K}_{0}^{n+1}$ and $t\geq 0$ by letting
\begin{eqnarray*}
h^{p}_{K+_p t\cdot L}=h^{p}_{K}+th^{p}_{L}.
\end{eqnarray*}
This led to the notion of $k$-th $L_p$-surface area measure, $S_{p,
k}(K, \cdot)$, for each body $K \in \mathcal{K}_{0}^{n+1}$, via the
variational formula:
\begin{eqnarray*}
\frac{d}{dt}W_{k}(K+_p t\cdot
L)|_{t=0}=\frac{1}{n+1}\int_{\mathbb{S}^n}h_{L}^{p}(x)dS_{p, k}(K,
x),
\end{eqnarray*}
which holds for each  $L \in \mathcal{K}_{0}^{n+1}$, $p\geq 1$ and
$1\leq k\leq n$. Here $W_k(K)$ is the usual quermassintegral for
$K$. It was also shown in \cite{Lu93} that for each $K \in
\mathcal{K}_{0}^{n+1}$,
\begin{eqnarray*}
d S_{p, k}(K, \cdot)=h_{K}^{1-p}(x)dS_{k}(K, x),
\end{eqnarray*}
which shows that $k$-th $L_p$-surface area measure may be extended
to all $p \in \mathbb{R}$ in a completely obvious manner. Here
$dS_k(K, \cdot)$ is the $k$-th surface area measure of $K$.

The associated $L_p$-Christoffel-Minkowski problem in the
$L_p$-Brunn-Minkowski theory (first studied in \cite{Lu93}) asks:
For fixed $p \in \mathbb{R}$, given a Borel measure $\mu$ on
$\mathbb{S}^n$ (the data) what are necessary and sufficient
conditions on the measure $\mu$ to guarantee the existence of a body
$K \in \mathcal{K}_{0}^{n+1}$ such that $\mu=S_{p, k}(K, \cdot)$,
and if such a body $K$ exists to what extent is $K$ unique?

If the measure $\mu$ has a density function $\psi: \mathbb{S}^n\rightarrow [0, +\infty)$,
then the partial differential equation that is associated with the $L_p$-Christoffel-Minkowski
problem (with data $\psi$) is the $k$-Hessian type equation on $\mathbb{S}^n$
\begin{eqnarray}\label{GCM}
\sigma_{k}(u_{ij}+u\delta_{ij})=\psi(x)u^{p-1},
\end{eqnarray}
where $u_{ij}$ are the second order covariant derivatives with
respect to any orthonormal frame $\{e_1, e_2, ..., e_n \}$ on
$\mathbb{S}^n$, $\delta_{ij}$ is the standard Kronecker symbol and
\begin{equation*}
\sigma_k(u_{ij}+u\delta_{ij})=\sigma_k(\lambda(x)) = \sum _{1 \le
i_1 < i_2 <\cdots<i_k\leq
n}\lambda_{i_1}(x)\lambda_{i_2}(x)\cdots\lambda_{i_k}(x),
\end{equation*}
with $\lambda(x)=(\lambda_1(x), \lambda_2(x), \cdot\cdot\cdot,
\lambda_n(x))$ being the eigenvalues of the matrix
$u_{ij}+u\delta_{ij}$.

The $L_p$-Minkowski problem ($k=n$) has been extensively studied
during the last twenty years, see \cite{Bi17, Ch06, Lu93, Lu95,
Lu96} and see also \cite{Sch13} for the most comprehensive list of
results. When $p\geq1$, the existence and uniqueness of solutions
are well understood. However, when $p<1$ the uniqueness of solutions
to the $L_p$-Minkowski problem is very subtle, and indeed it was
shown in \cite{Jian15} that the uniqueness fails when $p<0$ even
restricted to smooth origin-symmetric convex bodies. Recently,
Brendle-Choi-Daskaspoulos' work \cite{Br17} implies the uniqueness
holds true for $1>p>-1-n$ and $\psi\equiv1$, and Chen-Huang-Li-Liu
\cite{Chen19} prove the uniqueness for $p$ close to $1$ and even
positive function $\psi$.

For $k<n$, $L_p$-Christoffel-Minkowski problem is difficult to deal
with, since the admissible solution to equation \eqref{GCM} is not
necessary a geometric solution to $L_p$-Christoffel- Minkowski
problem if $k<n$. So, one needs to deal with the convexity of the
solutions of \eqref{GCM}. Under a sufficient condition on the
prescribed function, Guan-Ma \cite{Guan03} proved the existence of a
unique convex solution. The key tool to handle the convexity is the
constant rank theorem for fully nonlinear partial differential
equations. Later, the equation \eqref{GCM} has been studied by
Hu-Ma-Shen \cite{Hu04} for $p\geq k+1$ and Guan-Xia \cite{Guan18}
for $1<p<k+1$ and even prescribed data, by using the constant rank
theorem. See also \cite{Guan10} for the proof of uniqueness and
\cite{Li15} for a simple proof. But for $p<1$, since the lack of
Firey's extension of Brunn-Minkowski inequality (See Corollary 1.3
in \cite{Lu93}) and constant rank theorem, the existence and
uniqueness are difficult and challenging problems. As far as I know,
the existence and uniqueness for $p<1$ are unknown until now. In
this paper, we make some progresses on the uniqueness for $p<1$ and
$\psi\equiv1$.

We consider the uniqueness of strictly convex solutions to the following
$L_p$-Christoffel-Minkowski problem:
\begin{eqnarray}\label{CM}
\sigma_{k}(u_{ij}+u\delta_{ij})=u^{p-1} \quad \mbox{on} \quad \mathbb{S}^n,
\end{eqnarray}
here the strict convexity of a solution, $u$,  means that the matrix
\begin{eqnarray*}
(u_{ij}+u\delta_{ij})
\end{eqnarray*}
is positive definite on $\mathbb{S}^n$.
We mainly get the following result.
\begin{theorem}\label{main}
Assume $u \in C^{4}(\mathbb{S}^n)$ is a strictly convex solution to \eqref{CM},
then $u\equiv constant$ for $1>p>1-k$.
\end{theorem}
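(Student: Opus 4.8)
The plan is to establish a differential-geometric identity that forces the rank of $(u_{ij}+u\delta_{ij})$ to be maximal or the solution to be a round sphere, following the strategy of Brendle--Choi--Daskalopoulos but adapted to the $k$-Hessian setting. First I would introduce the positive definite matrix $W_{ij}=u_{ij}+u\delta_{ij}$, its inverse $W^{ij}$, and work with $F=\sigma_k(W)^{1/k}$ (or directly with $\sigma_k$), so that equation \eqref{CM} reads $\sigma_k(W)=u^{p-1}$. The key new object, as advertised in the abstract, is an auxiliary function $Z$; I expect it to be something of the shape $Z = \operatorname{tr}\!\big(\sigma_k^{ij}(W)\,(W_{ik}-c\,u\,\delta_{ik})(W_{kj}-c\,u\,\delta_{kj})\big)/u^{\alpha}$ or a Sobolev-type quantity built from $W$ minus its spherical value $r\delta_{ij}$, normalized by an appropriate power of $u$ so that it vanishes precisely on the constant solutions. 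The goal is to show $Z$ satisfies an elliptic differential inequality of the form $\sigma_k^{ij}Z_{ij} + (\text{lower order}) \geq 0$ (or $\leq 0$) with a favorable sign in the zeroth-order term coming from the hypothesis $1-k<p<1$, so that the maximum principle pins $Z$ to be identically zero.

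The main steps, in order, are: (1) differentiate \eqref{CM} once and twice, using the Codazzi-type commutation relations on $\mathbb{S}^n$ (namely $W_{ij,k}$ is symmetric in all indices up to curvature terms $u_k\delta_{ij}-u_j\delta_{ik}$ from $[\nabla_i,\nabla_j]$ acting on the sphere), to express $\sigma_k^{ij}u_{ij}$ and $\sigma_k^{ij}W_{ij,kl}$ in terms of $u$, $\nabla u$, and $\sigma_k^{ij,rs}$ terms; (2) compute $\sigma_k^{ij}\nabla_i\nabla_j Z$, keeping careful track of the Newton-tensor inequalities (e.g. $\sigma_k^{ij}$ is positive definite, and the concavity of $\sigma_k^{1/k}$ gives $\sigma_k^{ij,rs}\xi_{ij}\xi_{rs}\leq$ a controlled term); (3) identify the sign-definite zeroth order contribution: when $p<1$ the factor $u^{p-1}$ is "superhomogeneous" relative to $\sigma_k$, and the quantity $(1-p)$ paired against $(k - (1-p)) = (k-1+p) > 0$ (which is exactly where $p>1-k$ enters) produces the needed coercive sign; (4) invoke the strong maximum principle to conclude $Z\equiv 0$, and finally (5) show that $Z\equiv 0$ together with \eqref{CM} forces $W_{ij}=r\delta_{ij}$ for a constant $r$, hence $u_{ij}+u\delta_{ij}$ is a constant multiple of the identity, which on $\mathbb{S}^n$ implies $u\equiv$ constant (the only solutions of $u_{ij}+u\delta_{ij}=r\delta_{ij}$ with $u$ a function are $u=r+\langle a,x\rangle$, and positivity/the equation forces $a=0$, or more directly $u\equiv r$ if $Z$ is built to kill the linear part too).

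The hard part will be step (2)–(3): choosing the precise form of $Z$ and the exponent of $u$ in its normalization so that all the "bad" gradient terms arising from differentiating $\sigma_k$ twice (the third-derivative terms $W_{ij,k}$ contracted against $\sigma_k^{ij,rs}$, and the curvature commutator terms) either cancel, or can be absorbed using the positivity of $\sigma_k^{ij}$ and Cauchy--Schwarz, leaving a clean differential inequality. This is delicate because, unlike the Monge--Ampère case $k=n$ where one has the full determinant structure and polar-coordinate tricks, for general $k$ one only has the divergence-free property of the Newton tensor $\nabla_i(\sigma_k^{ij})=0$ and the inequalities among elementary symmetric functions; getting the constant in front of the zeroth-order term to have the right sign for the entire range $1-k<p<1$ (rather than just $p$ near $1$) is exactly the technical novelty. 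I would also need to verify that strict convexity is preserved in all the manipulations so that $\sigma_k^{ij}>0$ and the maximum principle applies without degeneracy. A secondary obstacle is the boundary-case analysis when $Z$ attains its extremum: one must rule out that the inequality degenerates, which typically requires the strong maximum principle plus a connectedness argument on $\mathbb{S}^n$.
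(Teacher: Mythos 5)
Your high-level plan — adapt Brendle--Choi--Daskalopoulos, build an auxiliary quantity $Z$, extract a favorable sign from $1-k<p<1$, and close with the strong maximum principle — is the right general strategy and matches the paper in spirit. But there is a genuine gap at exactly the point you flag as ``the hard part'': you have not identified the correct auxiliary functions, and your guessed form of $Z$ is not the one that works, nor is the one-function/one-inequality structure you describe the structure of the actual argument.

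The paper uses \emph{two} auxiliary functions. The first is
\[
W(x)=u\,\lambda_1(b_{ij})-\beta\bigl(u^2+|Du|^2\bigr),\qquad \beta=\tfrac{p-1+k}{2k},
\]
with $\lambda_1$ the largest eigenvalue of $b_{ij}=u_{ij}+u\delta_{ij}$; this is not smooth where eigenvalues coincide, and one needs the viscosity-type second-derivative lemma (Lemma 5 in Brendle--Choi--Daskalopoulos) to evaluate $\sigma_k^{ij}W_{ij}$ at a maximum. Step 1 shows that at any maximum point of $W$ one has $\lambda_1=\cdots=\lambda_n$ and $|Du|=0$. The second function is
\[
Z(x)=u\,F(b_{ij})-n\beta\bigl(u^2+|Du|^2\bigr),\qquad
F=\frac{n}{k}\Bigl[\sigma_1-(k+1)\frac{\sigma_{k+1}}{\sigma_k}\Bigr]
=\sum_i\frac{n\,\sigma_{k-1}(\lambda|i)\lambda_i^2}{k\sigma_k},
\]
and this \emph{specific} choice is the whole point: $F$ is designed precisely so that the troublesome term $2\beta\bigl(k\sigma_k f - n\sum_i\sigma_{k-1}(\lambda|i)\lambda_i^2\bigr)$, which has the wrong sign for the naive choice $f=\sigma_1$, vanishes identically. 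Moreover $f$ is convex on the positive cone (because $\sigma_{k+1}/\sigma_k$ is concave), which supplies a crucial inequality in the computation of $\sigma_k^{ij}Z_{ij}$. Step 2 then shows the max set of $W$ is open via a local maximum principle for $Z$ near any max point of $W$ (using $Z\le nW$ there with equality at the umbilic points), and connectedness of $\mathbb{S}^n$ finishes. Your proposed $Z$ — a normalized Newton-tensor contraction against $(W_{ij}-cu\delta_{ij})^2$ — has no analogue of the cancellation $k\sigma_k f-n\sum\sigma_{k-1}(\lambda|i)\lambda_i^2=0$, and a single ``$\sigma_k^{ij}Z_{ij}\ge 0$ globally $\Rightarrow Z\equiv 0$'' step is not available because $W$ is only semiconcave and $Z$'s inequality is only established locally near umbilic points. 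Without the discovery of the convex $f$ and the two-step openness argument, the proposal does not close.
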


\begin{remark}
We know from McCoy's work \cite{M11}, Theorem \ref{main} holds true
for $p=1-k$. Thus, Theorem \ref{main} in fact holds true for $1>
p\geq1-k$.
\end{remark}

Our proof is motivated by the idea of Choi-Daskaspoulos \cite{Ch16}
and Brendle-Choi-Daskaspoulos \cite{Br17} in which they show the
self-similar solution of $\alpha$-Gauss curvature flow, .i.e, an
embedded, strictly convex hypersurface in $\mathbb{R}^{n+1}$ given
by $X: \mathbb{S}^n\rightarrow \mathbb{R}^{n+1}$ satisfying the
equation
\begin{eqnarray*}
K^{\alpha}=\langle X, \nu\rangle
\end{eqnarray*}
is a sphere when $\alpha>\frac{1}{n+2}$, where $K$ and $\nu$ are the
Gauss curvature and out unit normal of $\Sigma$ respectively. Their
result is also equivalent to say that the $L_p$-Minkowski problem
\eqref{CM}($k=n$) has the unique solution $u\equiv 1$ for
$1>p>-n-1$. The idea of their proof is to apply Maximum Principles
for the following two important auxiliary functions which are
introduced in \cite{Ch16, Br17}:
\begin{eqnarray*}
W(x)=K^{\alpha}\lambda^{-1}_{1}(h_{ij})-\frac{n\alpha-1}{2n\alpha}|X|^2=u \cdot \lambda_1(b_{ij})
-\frac{n\alpha-1}{2n\alpha}(u^2+|Du|^2)
\end{eqnarray*}
and
\begin{eqnarray}\label{Old-Z}
Z(x)=K^{\alpha}\tr(b_{ij})-\frac{n\alpha-1}{2\alpha}|X|^2=u\cdot \tr(b_{ij})-\frac{n\alpha-1}{2\alpha}(u^2+|Du|^2),
\end{eqnarray}
where  $b_{ij}$ is the inverse matrix of the second fundamental form
$h_{ij}$ of $\Sigma$, $\lambda_1(b_{ij})$ is the biggest eigenvalues
of $b_{ij}$, $\lambda_{1}^{-1}(h_{ij})=\lambda_1(b_{ij})$, and $u:
\mathbb{S}^n\rightarrow \mathbb{R}$ is the support function of
$\Sigma$. Later, Gao-Li-Ma \cite{Gao18} and Gao-Ma\cite{Gao19} use
this two functions above to study the uniqueness of closed
self-similar solutions to $\sigma^{\alpha}_{k}$-curvature flow
following the idea of \cite{Ch16, Br17}. In details, in \cite{Gao18}
the authors consider the following general equation
\begin{eqnarray}\label{SEq}
S^{\alpha}(\kappa_1,...,\kappa_n)=\langle X, \nu\rangle,
\end{eqnarray}
where $S$ is a 1-homogeneous smooth symmetric function of the
principal curvatures $\kappa_i$ of the hypersurface $\Sigma$ given
by $X: \mathbb{S}^n\rightarrow \mathbb{R}^{n+1}$. Under some
assumptions on $S$, they show $\Sigma=X(\mathbb{S}^n)$ is a round
sphere for $\alpha\geq1$. Examples of $S$ include
$S=\sigma_{k}^{\frac{1}{k}}(\kappa_1,...,\kappa_n)$, but not include
$S=(\frac{\sigma_n}{\sigma_{n-k}})^{\frac{1}{k}}(\kappa_1,...,\kappa_n)$,
for which the equation \eqref{SEq} is equivalent to $L_p$
-Christoffel-Minkowski problem \eqref{CM} with
$p=1-\frac{1}{\alpha}$. The main difficulty lies in the
non-positivity of the term \eqref{Diff}
\begin{eqnarray*}
2\beta\bigg(k\sigma_k
f-n\sum_{i=1}^{n}\sigma_{k-1}(\lambda|i)\lambda^{2}_{i}\bigg),
\end{eqnarray*}
if $f=\sigma_1$. (In this case, the revised function $Z$
\eqref{New-Z} is just the original function $Z$ \eqref{Old-Z}.) To
overcome this difficulty, the easiest way is to choose $f$ such that
$$k\sigma_k
f-n\sum_{i=1}^{n}\sigma_{k-1}(\lambda|i)\lambda^{2}_{i}=0.$$ So, we
need to modify the function $Z$. We introduce the following two
auxiliary functions: one is the original function $W$
\begin{eqnarray*}
W(x)=u \cdot \lambda_1(b_{ij})-\beta(u^2+|Du|^2),
\end{eqnarray*}
the other is a new function $Z$
\begin{eqnarray}\label{New-Z}
Z(x)=u F(b_{ij})-n\beta(u^2+|Du|^2),
\end{eqnarray}
where $\lambda_n\leq...\leq \lambda_2 \leq \lambda_1$ are the eigenvalues
of the matrix $b_{ij}=u_{ij}+u\delta_{ij}$, $\beta=\frac{p-1+k}{2k}$ and
\begin{eqnarray*}
F(b_{ij})=f(\lambda_1, \lambda_2, ..., \lambda_n)
=\sum_{i=1}^{n}\frac{n\sigma_{k-1}(\lambda|i)\lambda^{2}_{i}}{k\sigma_k}=
\frac{n}{k}[\sigma_1-(k+1)\frac{\sigma_{k+1}}{\sigma_k}],
\end{eqnarray*}
here the third equality can be easily derived by Proposition
\ref{sigma}(7) and we denote by
$\sigma_{k-1}(\lambda|i)=\frac{\partial \sigma_k}{\partial
\lambda_i}$. To our surprise, we find that $f(\lambda_1, \lambda_2,
..., \lambda_n)$ is a convex function in the positive cone
$$\Gamma_{+}=\{(\lambda_1, ..., \lambda_n): \lambda_i>0 \quad
\mbox{for} \quad 1\leq i\leq n\},$$ since
$\frac{\sigma_{k+1}}{\sigma_k}$ is concave in $\Gamma_{+}$ by (1)
and (5) in Proposition \ref{sigma}. This fact is the key to our
proof.

\begin{remark}
We can propose the following questions:

(i) When $k=n$ our result does not cover the previous result in
\cite{Ch16, Br17}, then it is natural to ask if one can improve it?

(ii) Can we construct some non-uniqueness examples of solutions to
\eqref{CM} for $p<1-k$?
\end{remark}

\section{The proof of Theorem \ref{CM}}

Let $\lambda=(\lambda_1,\dots,\lambda_n)\in\mathbb{R}^n$, we recall
the definition of elementary symmetric function for $1\leq k\leq n$
\begin{equation*}
\sigma_k(\lambda)= \sum _{1 \le i_1 < i_2 <\cdots<i_k\leq
n}\lambda_{i_1}\lambda_{i_2}\cdots\lambda_{i_k}.
\end{equation*}
We also set $\sigma_0=1$ and $\sigma_k=0$ for $k>n$ or $k<0$. Recall
that the G{\aa}rding's cone is defined as
\begin{equation*}
\Gamma_k  = \{ \lambda  \in \mathbb{R}^n :\sigma _i (\lambda ) >
0,\forall 1 \le i \le k\}.
\end{equation*}
We denote by $\sigma_{k-1}(\lambda|i)=\frac{\partial
\sigma_k}{\partial \lambda_i}$ and
$\sigma_{k-2}(\lambda|ij)=\frac{\partial^2 \sigma_k}{\partial
\lambda_i\partial \lambda_j}$. Then, we list some properties of
$\sigma_k$ which will be used later.

\begin{proposition}\label{sigma}
Let $\lambda=(\lambda_1,\dots,\lambda_n)\in\mathbb{R}^n$ and $1\leq
k\leq n$, then we have

(1) $\Gamma_1\supset \Gamma_2\supset \cdot\cdot\cdot\supset
\Gamma_n=\Gamma_{+}$;

(2) $\sigma_{k-1}(\lambda|i)>0$ for $\lambda \in \Gamma_k$ and
$1\leq i\leq n$;

(3) $\sigma_k(\lambda)=\sigma_k(\lambda|i)
+\lambda_i\sigma_{k-1}(\lambda|i)$ for $1\leq i\leq n$;

(4) $\sum_{i=1}^{n}\frac{\partial(\frac{\sigma_{k+1}}{\sigma_{k}})}
{\partial \lambda_i}\geq \frac{n-k}{k+1}$ for $\lambda \in
\Gamma_{k+1}$;

(5) $\sigma^{\frac{1}{k}}_{k}$ and
$\Big[\frac{\sigma_k}{\sigma_l}\Big]^{\frac{1}{k-l}}$ are concave in
$\Gamma_k$ for $0\leq l<k$;

(6) If $\lambda_1\geq \lambda_2\geq \cdot\cdot\cdot\geq \lambda_n$,
then $\sigma_{k-1}(\lambda|1)\leq \sigma_{k-1}(\lambda|2)\leq
\cdot\cdot\cdot\leq \sigma_{k-1}(\lambda|n)$ for $\lambda \in
\Gamma_k$;

(7) $\sum_{i=1}^{n}\sigma_{k-1}(\lambda|i)\lambda^{2}_{i}
=\sigma_1\sigma_k-(k+1)\sigma_{k+1}$.
\end{proposition}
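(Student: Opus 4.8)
All seven items are classical facts about elementary symmetric functions and the G\aa rding cones. My plan is to organize them so that a single positivity statement -- item (2) -- together with the concavity statement (5) carries the weight, while (1), (3), (6) and (7) reduce to elementary combinatorics and (4) follows from the Newton--MacLaurin inequalities.

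\emph{Identities (3) and (7).} For (3), split the $k$-element subsets $S\subseteq\{1,\dots,n\}$ into those with $i\notin S$, whose monomials sum to $\sigma_k(\lambda|i)$, and those with $i\in S$, each of which is $\lambda_i$ times a monomial of $\sigma_{k-1}(\lambda|i)$. The same bookkeeping gives, for every $1\le m\le n$, the identities $\sum_i\lambda_i\sigma_{m-1}(\lambda|i)=m\sigma_m$ (Euler's relation) and $\sum_i\sigma_m(\lambda|i)=(n-m)\sigma_m$. Using (3) first at level $k$ and then at level $k+1$,
\[
\sum_{i=1}^n\lambda_i^2\,\sigma_{k-1}(\lambda|i)=\sum_i\lambda_i\big(\sigma_k-\sigma_k(\lambda|i)\big)=\sigma_1\sigma_k-\sum_i\big(\sigma_{k+1}-\sigma_{k+1}(\lambda|i)\big)=\sigma_1\sigma_k-(k+1)\sigma_{k+1},
\]
which is (7); dividing by $k\sigma_k$ and multiplying by $n$ recovers the formula for $F$ in the introduction.

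\emph{Cone statements (1), (2), (6).} The only substantive item is (2), which I would prove by induction on $k$ -- carrying along the companion statement that $\lambda\in\Gamma_k$ forces $(\lambda|i)\in\Gamma_{k-1}$ (as a vector in $\mathbb{R}^{n-1}$) for every $i$ -- using (3) at all levels together with the connectedness and boundary structure of $\Gamma_k$; this is the classical statement. Granting (2): in (1) the inclusions hold by definition, $\Gamma_{+}\subseteq\Gamma_n$ is clear, and for $\lambda\in\Gamma_n$ one has $\sigma_n(\lambda)>0$ and $\sigma_{n-1}(\lambda|i)>0$, hence $\lambda_i=\sigma_n(\lambda)/\sigma_{n-1}(\lambda|i)>0$, i.e.\ $\Gamma_n\subseteq\Gamma_{+}$. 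For (6), applying (3) at level $k-1$ to the vector $\lambda|i$ relative to the index $j$, and symmetrically in $i,j$, gives
\[
\sigma_{k-1}(\lambda|i)-\sigma_{k-1}(\lambda|j)=(\lambda_j-\lambda_i)\,\sigma_{k-2}(\lambda|ij);
\]
iterating (2) twice shows $(\lambda|ij)\in\Gamma_{k-2}$, so $\sigma_{k-2}(\lambda|ij)\ge0$, and for $i<j$ the ordering gives $\lambda_j-\lambda_i\le0$, whence $\sigma_{k-1}(\lambda|i)\le\sigma_{k-1}(\lambda|j)$.

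\emph{Inequalities (4) and (5).} On $\Gamma_{k+1}\subseteq\Gamma_k$, where $\sigma_k>0$, the relations $\partial_{\lambda_i}\sigma_{k+1}=\sigma_k(\lambda|i)$, $\partial_{\lambda_i}\sigma_k=\sigma_{k-1}(\lambda|i)$ and the summation identities above give
\[
\sum_{i=1}^n\frac{\partial}{\partial\lambda_i}\Big(\frac{\sigma_{k+1}}{\sigma_k}\Big)=\frac{(n-k)\sigma_k^2-(n-k+1)\sigma_{k-1}\sigma_{k+1}}{\sigma_k^2}=(n-k)-(n-k+1)\frac{\sigma_{k-1}\sigma_{k+1}}{\sigma_k^2}.
\]
The Newton inequality $\sigma_{k-1}\sigma_{k+1}\le\frac{k(n-k)}{(k+1)(n-k+1)}\sigma_k^2$ (valid for all real $\lambda$, being $p_{k-1}p_{k+1}\le p_k^2$ with $p_m=\sigma_m/\binom{n}{m}$) then bounds the last term and yields $\sum_i\partial_{\lambda_i}(\sigma_{k+1}/\sigma_k)\ge(n-k)-\frac{k(n-k)}{k+1}=\frac{n-k}{k+1}$, which is (4). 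Item (5) is the deepest of the seven; rather than reprove it I would cite G\aa rding's concavity theorem for $\sigma_k^{1/k}$ on $\Gamma_k$ (the case $l=0$) and the classical results of Marcus--Lopes and Lin--Trudinger for general $0\le l<k$. For a self-contained treatment of $\sigma_k^{1/k}$, the route is to note that $t\mapsto\sigma_k(\lambda+t\mu)$ has only real zeros for $\lambda\in\Gamma_k$ and arbitrary $\mu$ (hyperbolicity of $\sigma_k$ with hyperbolicity cone $\Gamma_k$), then to apply Newton's inequalities to those zeros, which produces the infinitesimal concavity inequality $\big(\sum_i\sigma_{k-1}(\lambda|i)\mu_i\big)^2\ge\tfrac{k}{k-1}\,\sigma_k(\lambda)\sum_{i\ne j}\sigma_{k-2}(\lambda|ij)\,\mu_i\mu_j$. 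I expect (2) and (5) to be the only points calling for real work; everything else is bookkeeping or a short derivative computation.
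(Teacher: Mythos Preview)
Your proposal is correct. The paper itself does not prove Proposition~\ref{sigma} at all: its ``proof'' consists of the single sentence that the properties are well known, with references to Lieberman, Huisken--Sinestrari, Gerhardt, and Caffarelli--Nirenberg--Spruck for the various items. Your write-up therefore goes substantially beyond what the paper does, supplying honest sketches for (1), (3), (4), (6), (7) and correctly flagging (2) and (5) as the nontrivial parts that require either an inductive argument or the classical G\aa rding/Marcus--Lopes machinery. The derivations you give for (4) via Newton's inequality and for (7) via two applications of (3) are clean and correct, and your treatment of (6) through $\sigma_{k-1}(\lambda|i)-\sigma_{k-1}(\lambda|j)=(\lambda_j-\lambda_i)\sigma_{k-2}(\lambda|ij)$ is the standard one. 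In short, there is nothing to compare at the level of mathematical strategy---the paper defers entirely to the literature, while you actually argue the points; your sketches are sound and match the classical proofs to which the paper refers.
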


\begin{proof}
All the properties are well known. For example, see Chapter XV in
\cite{Li96} or \cite{Hui99} for proofs of (1), (2), (3),  (6) and
(7); see Lemma 2.2.19 in \cite{Ger06} for a proof of (4); see
\cite{CNS85} and \cite{Li96} for a proof of (5).
\end{proof}

We choose an orthonormal frame $\{e_1, e_2, ..., e_n \}$ on
$\mathbb{S}^n$. We use the notations $u_i=D_{e_i} u$,
$u_{ij}=D_{e_j}D_{e_i} u$, $D_{e_l}b_{ij}=b_{ij; l}$ ..., and so on,
where $D$ is the standard Levi-Civita connection on $\mathbb{S}^n$.
Set $b_{ij}=u_{ij}+u\delta_{ij}$, we denote by $\lambda_n(x)\leq
...\leq\lambda_2(x)\leq\lambda_1(x)$ are the eigenvalues of
$\{b_{ij}(x)\}$, arranged in decreasing order. Each eigenvalue
defines a Lipschitz continuous function on $\mathbb{S}^n$.

We recall the following Lemma which is similar to Lemma 5 in
\cite{Br17} and their proofs are almost the same.

\begin{lemma}\label{key}
Suppose that $\varphi$ is a smooth function on $\mathbb{S}^n$ such that
\begin{eqnarray*}
\lambda_1\leq \varphi \quad \mbox{everywhere and} \quad \lambda_1(x_0)=\varphi(x_0)
\end{eqnarray*}
for some $x_0 \in \mathbb{S}^n.$ Let $m$ denote the multiplicity of the biggest eigenvalue of $b_{ij}$ at $x_0$, so that
\begin{eqnarray*}
\lambda_n(x_0)\leq ... \leq \lambda_{m+1}(x_0)<\lambda_m(x_0)=...=\lambda_1(x_0).
\end{eqnarray*}
Then, we have
\begin{eqnarray}\label{b-1}
b_{ij; l}=\varphi_l\delta_{ij} \quad \mbox{at}\ x_0 \ \ \mbox{for} \ 1\leq i, j\leq m.
\end{eqnarray}
Moreover,
\begin{eqnarray*}
\varphi_{ii}\geq  b_{11; ii}+2\sum_{l>m}\frac{(b_{1l; i})^2}{\lambda_1-\lambda_l}, \quad \mbox{at} \ x_0.
\end{eqnarray*}
\end{lemma}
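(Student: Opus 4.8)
\textbf{Proof proposal for Lemma \ref{key}.}
The plan is to follow the standard argument for touching functions of the largest eigenvalue, adapted to the operator $b_{ij}=u_{ij}+u\delta_{ij}$ on $\mathbb{S}^n$, exactly as in Lemma 5 of \cite{Br17}. Fix $x_0$ and choose the orthonormal frame $\{e_1,\dots,e_n\}$ so that $\{b_{ij}(x_0)\}$ is diagonal with $b_{ii}(x_0)=\lambda_i(x_0)$ arranged in decreasing order, and so that $\lambda_m(x_0)=\dots=\lambda_1(x_0)>\lambda_{m+1}(x_0)$. The key point is that the function $\lambda_1$ is only Lipschitz, but on the top eigenspace one can work instead with the smooth functions $\xi\mapsto \langle b\,\xi,\xi\rangle$ for unit vectors $\xi$ in the $m$-dimensional top eigenspace; since $\varphi-\lambda_1\ge 0$ with equality at $x_0$, for \emph{every} such $\xi$ the smooth function $g_\xi(x):=\varphi(x)-\langle b(x)\xi(x),\xi(x)\rangle$ (after parallel-transporting $\xi$) attains an interior minimum at $x_0$, so $Dg_\xi(x_0)=0$ and $D^2g_\xi(x_0)\ge 0$.

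First I would extract the first-order information \eqref{b-1}. Taking $\xi=e_a$ for $1\le a\le m$ and differentiating $g_{e_a}$ in direction $e_l$ gives $b_{aa;l}(x_0)=\varphi_l(x_0)$ for all $l$. Next, taking $\xi=\tfrac{1}{\sqrt 2}(e_a+e_b)$ for $1\le a\ne b\le m$ yields, after expanding $\langle b\xi,\xi\rangle=\tfrac12(b_{aa}+b_{bb})+b_{ab}$ and using the previous identity, that $b_{ab;l}(x_0)=0$ for all $l$. Combining the two cases gives $b_{ij;l}=\varphi_l\delta_{ij}$ at $x_0$ for $1\le i,j\le m$, which is \eqref{b-1}. (Here one must remember that for a diagonal matrix at $x_0$ the covariant derivative of the matrix entries behaves well; the Codazzi-type symmetry $b_{ij;l}=b_{il;j}$, which follows from commuting covariant derivatives on $\mathbb{S}^n$ together with the definition $b_{ij}=u_{ij}+u\delta_{ij}$, is what makes this consistent.)

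For the second-order estimate I would use the $\xi=e_1$ choice and the inequality $D^2g_{e_1}(x_0)\ge 0$ in a fixed direction $e_i$. The subtlety is that parallel transport of $e_1$ along a curve through $x_0$ is not quite the eigenvector field of $b$, so one must choose the extension of $e_1$ carefully: one takes the variation field that keeps $\xi$ of unit length and infinitesimally rotates $e_1$ into the lower eigenspace $\mathrm{span}\{e_{m+1},\dots,e_n\}$ so as to maximize $\langle b\xi,\xi\rangle$ to second order. Second-order perturbation theory for the quadratic form then produces the familiar off-diagonal correction: for a unit vector $\xi(t)=e_1+t\sum_{l>m}c_l e_l+O(t^2)$ the optimal choice makes
\begin{eqnarray*}
\frac{d^2}{dt^2}\Big|_{t=0}\langle b(x_0)\xi(t),\xi(t)\rangle \le b_{11;ii}(x_0)+2\sum_{l>m}\frac{(b_{1l;i}(x_0))^2}{\lambda_1-\lambda_l},
\end{eqnarray*}
while $\varphi_{ii}$ bounds the left side from above because $\varphi-\langle b\xi,\xi\rangle\ge 0$ with equality at $x_0$. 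Rearranging gives the stated inequality $\varphi_{ii}\ge b_{11;ii}+2\sum_{l>m}\frac{(b_{1l;i})^2}{\lambda_1-\lambda_l}$ at $x_0$.

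The main obstacle is purely bookkeeping rather than conceptual: one has to justify that the quantities $\langle b(x)\xi(x),\xi(x)\rangle$ are genuinely smooth in $x$ and set up the second variation so that the cross terms $b_{1l;i}$ appear with the correct sign and denominator $\lambda_1-\lambda_l>0$ (which is positive precisely because $l>m$). Since the argument is local and identical in structure to \cite{Br17}, I would simply remark that the only change is the replacement of the second fundamental form's Codazzi equations by the trivial commutation identities for $u_{ij}+u\delta_{ij}$ on the sphere, which hold because $D$ is the Levi-Civita connection of the round metric; no new idea is needed.
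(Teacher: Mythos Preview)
Your approach is correct and coincides with what the paper intends: the paper gives no detailed proof, merely remarking that the argument is ``almost the same'' as Lemma~5 of \cite{Br17}, and your outline reproduces that argument faithfully (test vectors $\xi$ in the top eigenspace for the first-order identity \eqref{b-1}, then an optimally chosen rotation into the lower eigenspace for the second-order inequality). One small slip to fix: in your displayed second-variation formula the inequality sign is reversed---for the optimal rotation coefficients $c_l=b_{1l;i}/(\lambda_1-\lambda_l)$ the second derivative of $\langle b\,\xi,\xi\rangle$ along the $e_i$-geodesic \emph{equals} $b_{11;ii}+2\sum_{l>m}(b_{1l;i})^2/(\lambda_1-\lambda_l)$, and it is this equality together with $\varphi_{ii}\ge \tfrac{d^2}{dt^2}\big|_{t=0}\langle b\,\xi,\xi\rangle$ that yields the claim; as written, the chain $\varphi_{ii}\ge(\cdot)\le\text{RHS}$ does not close.
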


Now we list the following well-known result (See Lemma 3.2 in
\cite{Gao18} or \cite{And94}).

\begin{lemma}\label{Gao0}
If $W=(w_{ij})$ is a symmetric real matrix, $\lambda_i=\lambda_i(W)$
is one of its eigenvalues ($i = 1, ..., n$) and
$F=F(W)=f(\lambda(W))$ is a symmetric function of $\lambda_1, ...,
\lambda_n$, then for any real symmetric matrix $A= (a_{ij})$, we
have the following formulas:
\begin{eqnarray*}
\frac{\partial^2 F}{\partial w_{ij}\partial
w_{st}}a_{ij}a_{st}=\frac{\partial^2 f}{\partial\lambda_p
\partial\lambda_q}a_{pp}a_{qq}+2\sum_{p<q}\frac{\frac{\partial
f}{\partial \lambda_p}-\frac{\partial f}{\partial
\lambda_q}}{\lambda_p-\lambda_q}a^{2}_{pq}.
\end{eqnarray*}

\end{lemma}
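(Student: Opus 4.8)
I would prove this spectral second‑derivative identity by exploiting the orthogonal invariance of $F$ to reduce to a pointwise computation at a diagonal matrix, after which the formula falls out of second‑order eigenvalue perturbation theory. Since $F(O^{T}WO)=f(\lambda(O^{T}WO))=f(\lambda(W))=F(W)$ for every orthogonal $O$, fix a symmetric $W$, choose $O$ diagonalizing it, and for a real symmetric perturbation $B$ write
$$F(W+tB)=F\big(O^{T}(W+tB)O\big)=F(\Lambda+t\,O^{T}BO),$$
where $\Lambda=\mathrm{diag}(\lambda_1,\dots,\lambda_n)$. Differentiating twice in $t$ at $t=0$ shows that $\frac{\partial^{2}F}{\partial w_{ij}\partial w_{st}}a_{ij}a_{st}$, which is exactly $\frac{d^{2}}{dt^{2}}F(W+tA)\big|_{t=0}$, depends only on $\lambda(W)$ and on $O^{T}AO$. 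Hence it suffices to prove the identity when $W=\Lambda$ is diagonal and $A$ is an arbitrary real symmetric matrix.

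At a diagonal $\Lambda$ whose entries $\lambda_p$ are pairwise distinct, analytic (Rellich--Kato) perturbation theory applied to $W(t)=\Lambda+tA$ produces smooth eigenvalue branches $\lambda_p(t)$ with
$$\lambda_p(t)=\lambda_p+t\,a_{pp}+t^{2}\sum_{q\neq p}\frac{a_{pq}^{2}}{\lambda_p-\lambda_q}+O(t^{3}),$$
so $\dot\lambda_p(0)=a_{pp}$ and $\ddot\lambda_p(0)=2\sum_{q\neq p}\frac{a_{pq}^{2}}{\lambda_p-\lambda_q}$. The chain rule then gives
$$\frac{d^{2}}{dt^{2}}F(W(t))\Big|_{t=0}=\sum_{p,q}\frac{\partial^{2}f}{\partial\lambda_p\partial\lambda_q}\,\dot\lambda_p(0)\dot\lambda_q(0)+\sum_{p}\frac{\partial f}{\partial\lambda_p}\,\ddot\lambda_p(0)=\sum_{p,q}\frac{\partial^{2}f}{\partial\lambda_p\partial\lambda_q}a_{pp}a_{qq}+2\sum_{p}\sum_{q\neq p}\frac{\partial f}{\partial\lambda_p}\frac{a_{pq}^{2}}{\lambda_p-\lambda_q}.$$
Symmetrizing the last double sum under $p\leftrightarrow q$ and using $a_{pq}=a_{qp}$ turns $2\sum_{p}\sum_{q\neq p}\frac{\partial f/\partial\lambda_p}{\lambda_p-\lambda_q}a_{pq}^{2}$ into $2\sum_{p<q}\frac{\partial f/\partial\lambda_p-\partial f/\partial\lambda_q}{\lambda_p-\lambda_q}a_{pq}^{2}$, which is precisely the asserted expression.

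Finally I would drop the simplicity assumption by a density argument. Because $f$ is symmetric and $C^{2}$, the spectral function $F=f\circ\lambda$ is $C^{2}$ on the space of symmetric matrices, so the left‑hand side of the identity is continuous in $W$; and on the right‑hand side each quotient $\frac{\partial f/\partial\lambda_p-\partial f/\partial\lambda_q}{\lambda_p-\lambda_q}$ extends continuously across $\{\lambda_p=\lambda_q\}$, since symmetry of $f$ forces the numerator to vanish there, with limit read off from $\partial^{2}f$. Matrices with simple spectrum being dense, the identity proved above for diagonal matrices with distinct entries passes to the limit and holds for every symmetric $W$. The one genuinely delicate point is exactly this degeneracy: naive second‑order perturbation theory fails inside an eigenvalue cluster, so one must either use the continuity/density route just described, or perform degenerate perturbation theory block by block (diagonalizing the restriction of $A$ to each eigenspace), both of which recover the same formula; I would favor the density argument for brevity.
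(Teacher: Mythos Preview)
Your argument is correct and is the standard route to this identity: reduce by orthogonal invariance to a diagonal $W$, read off $\dot\lambda_p$ and $\ddot\lambda_p$ from analytic perturbation theory, apply the chain rule, and then remove the simple-spectrum hypothesis by continuity. The one place to be slightly more careful is the density step: you should make explicit that the approximating matrices $W_\varepsilon$ are taken diagonal in the \emph{same} basis as $W$ (so that the entries $a_{pp},a_{pq}$ on the right-hand side are fixed throughout the limit), and that the quotient $\dfrac{f_p-f_q}{\lambda_p-\lambda_q}$ has the well-defined limit $f_{pp}-f_{pq}$ as $\lambda_p\to\lambda_q$ because the antisymmetry of $f_p-f_q$ under the swap $\lambda_p\leftrightarrow\lambda_q$ forces it to vanish on the diagonal to first order. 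With those two clarifications the proof is complete.

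As for comparison with the paper: there is nothing to compare. The paper does not prove Lemma~\ref{Gao0} at all; it simply records it as a well-known result and cites Lemma~3.2 of Gao--Li--Ma \cite{Gao18} and Andrews \cite{And94}. Your write-up therefore supplies a self-contained proof where the paper gives only a reference, and the method you use is essentially the same as the one in those cited sources.
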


We also need Lemma 4.4 in \cite{Gao18} which statement as follows.

\begin{lemma}\label{Gao1}
Under the assumptions of Lemma \ref{key}, we have at $x_0$
\begin{eqnarray*}
&&\sigma_{k}^{ij, pq}b_{ij; 1}b_{pq;1}-2\sigma_{k}^{ii}\sum_{l>m}\frac{(b_{1l; i})^2}{\lambda_1-\lambda_l}
\\&=&\sum_{i\neq j}\sigma_{k-2}(\lambda|ij)b_{ii; 1}b_{jj;1}-2\sum_{i>m}\sigma_{k-1}(\lambda|i)
\frac{(b_{11; i})^2}{\lambda_1-\lambda_i}-2\sum_{i>m}\sigma_{k-1}(\lambda|i)\frac{(b_{ii; 1})^2}
{\lambda_1-\lambda_i}\\&&+2\sum_{i>j>m}\frac{\sigma_{k-1}(\lambda|i)(\lambda_1-\lambda_i)^2
-\sigma_{k-1}(\lambda|j)(\lambda_1-\lambda_j)^2}{(\lambda_1-\lambda_i)(\lambda_1-\lambda_j)(\lambda_i-\lambda_j)}b^{2}_{ij; 1}.
\end{eqnarray*}
\end{lemma}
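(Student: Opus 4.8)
\textbf{Proof strategy for Lemma \ref{Gao1}.}

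The plan is to apply Lemma \ref{Gao0} with the matrix $W = (b_{ij})$ diagonalized at $x_0$ (so that $b_{ij}(x_0) = \lambda_i\delta_{ij}$, with the eigenvalues in decreasing order) and with the symmetric function $F = \sigma_k$, taking the perturbation matrix $A = (b_{ij;1})$, i.e. the covariant derivative of $b$ in the $e_1$-direction. Lemma \ref{Gao0} then gives
\begin{eqnarray*}
\sigma_k^{ij,pq}b_{ij;1}b_{pq;1}
= \sum_{p,q}\frac{\partial^2\sigma_k}{\partial\lambda_p\partial\lambda_q}\,b_{pp;1}b_{qq;1}
+ 2\sum_{p<q}\frac{\sigma_{k-1}(\lambda|p)-\sigma_{k-1}(\lambda|q)}{\lambda_p-\lambda_q}\,b^2_{pq;1},
\end{eqnarray*}
using $\frac{\partial\sigma_k}{\partial\lambda_p}=\sigma_{k-1}(\lambda|p)$. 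For the first sum I split off the diagonal: $\frac{\partial^2\sigma_k}{\partial\lambda_p^2}=0$, so $\sum_{p,q}\frac{\partial^2\sigma_k}{\partial\lambda_p\partial\lambda_q}b_{pp;1}b_{qq;1}=\sum_{i\neq j}\sigma_{k-2}(\lambda|ij)b_{ii;1}b_{jj;1}$, which is exactly the first term on the right-hand side of the claimed identity.

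Next I handle the subtracted term $-2\sigma_k^{ii}\sum_{l>m}\frac{(b_{1l;i})^2}{\lambda_1-\lambda_l}$. Here $\sigma_k^{ii}=\sigma_{k-1}(\lambda|i)$ at $x_0$, so this term is $-2\sum_i\sum_{l>m}\sigma_{k-1}(\lambda|i)\frac{(b_{1l;i})^2}{\lambda_1-\lambda_l}$. I would now split the outer index $i$ into three cases. When $i=1$, the contribution is $-2\sum_{l>m}\sigma_{k-1}(\lambda|1)\frac{(b_{1l;1})^2}{\lambda_1-\lambda_l}$; by the Codazzi-type symmetry of $b_{ij;l}$ (full symmetry in all three indices, which holds for $b=\nabla^2 u+u\,I$ on $\mathbb{S}^n$), $b_{1l;1}=b_{11;l}$, and since for $1\le l\le m$ the eigenvalues coincide with $\lambda_1$ (so those terms either vanish from the constraint $l>m$ or are excluded), this gives $-2\sum_{i>m}\sigma_{k-1}(\lambda|1)\frac{(b_{11;i})^2}{\lambda_1-\lambda_i}$; using Lemma \ref{key} equation \eqref{b-1}, $b_{11;i}=\varphi_i\delta_{1i}=0$ for $i>m$ is \emph{not} quite what we want — rather, the point is that $\sigma_{k-1}(\lambda|1)$ can be rewritten. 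Actually the cleaner bookkeeping is: for $i>m$ the term is $-2\sum_{i>m}\sigma_{k-1}(\lambda|i)\frac{(b_{1l;i})^2}{\lambda_1-\lambda_l}$ summed over $l>m$; using symmetry $b_{1l;i}=b_{il;1}$ when we reorganize, and separating $l=i$ from $l\ne i$, produces the terms $-2\sum_{i>m}\sigma_{k-1}(\lambda|i)\frac{(b_{ii;1})^2}{\lambda_1-\lambda_i}$ and the off-diagonal double sum; and for $1<i\le m$, again $b_{1l;i}=b_{il;1}=\varphi_1\delta_{il}=0$ since $i\le m<l$, so these vanish. The $i=1$ piece, after symmetry $b_{1l;1}=b_{11;l}$, yields $-2\sum_{i>m}\sigma_{k-1}(\lambda|1)\frac{(b_{11;i})^2}{\lambda_1-\lambda_i}$, but $\sigma_{k-1}(\lambda|1)=\sigma_{k-1}(\lambda|i)$ for all $i\le m$ so we may freely relabel — the surviving structure matches $-2\sum_{i>m}\sigma_{k-1}(\lambda|i)\frac{(b_{11;i})^2}{\lambda_1-\lambda_i}$.

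Finally I assemble the off-diagonal $b^2_{ij;1}$ terms. From Lemma \ref{Gao0} we have $+2\sum_{p<q}\frac{\sigma_{k-1}(\lambda|p)-\sigma_{k-1}(\lambda|q)}{\lambda_p-\lambda_q}b^2_{pq;1}$, and from the subtracted term (the $l\ne i$, $i>m$, $l>m$ part, using $b_{1l;i}=b_{il;1}$) we get $-2\sum_{i>j>m}\sigma_{k-1}(\lambda|i)\frac{b^2_{ij;1}}{\lambda_1-\lambda_i}-2\sum_{i>j>m}\sigma_{k-1}(\lambda|j)\frac{b^2_{ji;1}}{\lambda_1-\lambda_j}$ (symmetrizing over the pair). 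For indices $p,q\le m$ the quotient $\frac{\sigma_{k-1}(\lambda|p)-\sigma_{k-1}(\lambda|q)}{\lambda_p-\lambda_q}$ is an indeterminate $0/0$ but the coefficient is understood as the corresponding derivative, and anyway those $b_{pq;1}$ with $p,q\le m$ already appear in the $\sigma_{k-2}$ sum bookkeeping; for a pair with $p\le m<q$, one checks $b_{pq;1}=b_{1q;p}=\varphi_1\delta_{pq}=0$. Hence only pairs $i>j>m$ contribute off-diagonally, and combining
\begin{eqnarray*}
2\,\frac{\sigma_{k-1}(\lambda|j)-\sigma_{k-1}(\lambda|i)}{\lambda_j-\lambda_i}
-\frac{2\sigma_{k-1}(\lambda|i)}{\lambda_1-\lambda_i}
-\frac{2\sigma_{k-1}(\lambda|j)}{\lambda_1-\lambda_j}
\end{eqnarray*}
over a common denominator $(\lambda_1-\lambda_i)(\lambda_1-\lambda_j)(\lambda_i-\lambda_j)$ and simplifying the numerator gives exactly $2\frac{\sigma_{k-1}(\lambda|i)(\lambda_1-\lambda_i)^2-\sigma_{k-1}(\lambda|j)(\lambda_1-\lambda_j)^2}{(\lambda_1-\lambda_i)(\lambda_1-\lambda_j)(\lambda_i-\lambda_j)}$, which is the last term in the statement. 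The main obstacle is the careful combinatorial accounting of which index ranges ($i$ or $l$ equal to $1$, $\le m$, or $>m$) survive after invoking \eqref{b-1}, together with the purely algebraic fraction-combining identity at the end; both are routine but error-prone, so I would verify the final numerator identity by direct expansion.
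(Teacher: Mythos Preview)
Your overall strategy matches the paper's proof---apply Lemma~\ref{Gao0}, use~\eqref{b-1} to kill terms with an index in $\{2,\dots,m\}$, then combine fractions---but there is a genuine gap in the bookkeeping that makes two of your steps incorrect (even though the errors happen to cancel).

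First, your claim that for $p\le m<q$ one has $b_{pq;1}=0$ is false when $p=1$. Equation~\eqref{b-1} says $b_{ij;l}=\varphi_l\delta_{ij}$ only when \emph{both} $i,j\le m$; your computation ``$b_{1q;p}=\varphi_1\delta_{pq}$'' applies~\eqref{b-1} with second index $q>m$, which is illegitimate. The correct use of symmetry is $b_{pq;1}=b_{1p;q}$, and since $1,p\le m$ this gives $\varphi_q\delta_{1p}$, which vanishes only for $p\ge 2$. For $p=1$ and $q>m$ the term $b_{1q;1}=b_{11;q}$ survives, and so the off-diagonal sum from Lemma~\ref{Gao0} contributes
\[
2\sum_{i>m}\bigl(\sigma_{k-1}(\lambda|i)-\sigma_{k-1}(\lambda|1)\bigr)\frac{(b_{11;i})^2}{\lambda_i-\lambda_1}.
\]
Second, your relabeling ``$\sigma_{k-1}(\lambda|1)=\sigma_{k-1}(\lambda|i)$ for $i\le m$, so we may freely relabel'' cannot turn $-2\sum_{i>m}\sigma_{k-1}(\lambda|1)\frac{(b_{11;i})^2}{\lambda_1-\lambda_i}$ into $-2\sum_{i>m}\sigma_{k-1}(\lambda|i)\frac{(b_{11;i})^2}{\lambda_1-\lambda_i}$, because the sum is over $i>m$. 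The actual reason the coefficient becomes $\sigma_{k-1}(\lambda|i)$ is precisely the extra $j=1$ off-diagonal contribution you dropped: adding it to the $i=1$ piece of the subtracted term gives the correct $-2\sum_{i>m}\sigma_{k-1}(\lambda|i)\frac{(b_{11;i})^2}{\lambda_1-\lambda_i}$. Finally, your displayed combination for $i>j>m$ has the denominators swapped: the subtracted cross terms produce $-\tfrac{2\sigma_{k-1}(\lambda|i)}{\lambda_1-\lambda_j}-\tfrac{2\sigma_{k-1}(\lambda|j)}{\lambda_1-\lambda_i}$, not what you wrote; with the correct denominators the numerator does collapse to $\sigma_{k-1}(\lambda|i)(\lambda_1-\lambda_i)^2-\sigma_{k-1}(\lambda|j)(\lambda_1-\lambda_j)^2$, but with your version it does not.
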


\begin{proof}
For convenience, we give a proof here. We know from Lemma \ref{Gao0}
\begin{eqnarray*}
\sigma_{k}^{ij, pq}b_{ij; 1}b_{pq;1}=\sum_{i\neq
j}\sigma_{k-2}(\lambda|ij)b_{ii; 1}b_{jj;1}+2\sum_{i>j}
\bigg(\sigma_{k-1}(\lambda|i)-\sigma_{k-1}(\lambda|j)\bigg)\frac{(b_{ij;
1})^2}{\lambda_i-\lambda_j}.
\end{eqnarray*}
Since we have by \eqref{b-1}
\begin{eqnarray*}
b_{ij;1}=b_{1i; j}=0
\end{eqnarray*}
for $2\leq i\leq m$, we can obtain
\begin{eqnarray*}
\sigma_{k}^{ij, pq}b_{ij; 1}b_{pq;1}&=&\sum_{i\neq
j}\sigma_{k-2}(\lambda|i j)b_{ii;
1}b_{jj;1}+2\sum_{i>m}\bigg(\sigma_{k-1}(\lambda|i)
-\sigma_{k-1}(\lambda|1)\bigg)\frac{(b_{11;
i})^2}{\lambda_i-\lambda_1}\\&&+2\sum_{i>j>m}
\bigg(\sigma_{k-1}(\lambda|i)-\sigma_{k-1}(\lambda|j)\bigg)\frac{(b_{ij;
1})^2}{\lambda_i-\lambda_j}
\end{eqnarray*}
and
\begin{eqnarray*}
2\sigma_{k}^{ii}\sum_{l>m}\frac{(b_{1l; i})^2}{\lambda_1-\lambda_l}&=&2\sigma_{k-1}(\lambda|1)\sum_{l>m}\frac{(b_{11; l})^2}{\lambda_1-\lambda_l}+2\sum_{i>m}\sigma_{k-1}(\lambda|i)\frac{(b_{1i; i})^2}{\lambda_1-\lambda_i}\\&&+2\sum_{i>l>m}\sigma_{k-1}(\lambda|i)\frac{(b_{1l; i})^2}{\lambda_1-\lambda_l}+2\sum_{l>i>m}\sigma_{k-1}(\lambda|i)\frac{(b_{1l; i})^2}{\lambda_1-\lambda_l},
\end{eqnarray*}
the lemma follows by adding the above two equations together.
\end{proof}

Now, we begin to prove Theorem \ref{main}. Set
\begin{eqnarray*}
W(x)=u \cdot \lambda_1(x)-\beta(u^2+|Du|^2),
\end{eqnarray*}
where $\beta=\frac{p-1+k}{2k}$ and $\lambda_n(x)\leq...\leq\lambda_1(x)$ are the eigenvalues of $\{b_{ij}(x)\}$. Since $1>p>1-k$, so $0<\beta<\frac{1}{2}$. Our proof is divided into two steps.

\

\textbf{Step 1}: we will prove
\begin{eqnarray*}
\lambda_1(x_0)=\lambda_2(x_0)=...=\lambda_n(x_0) \quad \mbox{and} \quad |D u|(x_0)=0
\end{eqnarray*}
for any $x_0 \in \{x \in \mathbb{S}^n: W(x)=\max_{\mathbb{S}^n}W\}$.

\

Assume $W(x)$ attains its maximum at $x_0$. As above, we denote by $m$
the multiplicity of the biggest eigenvalue of $b_{ij}$ at $x_0$. Let us define a
smooth function $\varphi$ such that
\begin{eqnarray*}
W(x_0)=u \cdot \varphi-\beta(u^2+|Du|^2).
\end{eqnarray*}
Since $W$ attains its maximum at $x_0$, we
have $\varphi(x)\geq \lambda_1(x)$ everywhere and $\lambda_1=\varphi$ at $x_0$.
Choose an orthonormal frame at $x_0$ such that
\begin{eqnarray*}
b_{ij}(x_0)=diag\{\lambda_1(x_0), ..., \lambda_n(x_0)\}
\end{eqnarray*}
with
\begin{eqnarray*}
\lambda_n(x_0)\leq ... \leq \lambda_{m+1}(x_0)<\lambda_m(x_0)=...=\lambda_1(x_0).
\end{eqnarray*}
Since $u \cdot \varphi-\beta(u^2+|Du|^2)=constant$, then
\begin{eqnarray}\label{1-diff}
[u\cdot\varphi-\beta(u^2+|Du|^2)]_i=0
\end{eqnarray}
and
\begin{eqnarray}\label{2-diff}
[u\cdot\varphi-\beta(u^2+|Du|^2)]_{ii}=0.
\end{eqnarray}
Taking \eqref{1-diff}'s value at $x_0$, we have by \eqref{b-1}
\begin{eqnarray*}
0=u_i\lambda_1+u b_{11; i}-2\beta u_i\lambda_i.
\end{eqnarray*}
Thus,
\begin{eqnarray*}
b_{11; i}=\frac{u_i}{u}(2\beta \lambda_i-\lambda_1),
\end{eqnarray*}
which implies together with \eqref{b-1} and $0<2\beta<1$, $u_i(x_0)=0$ for $2\leq
i\leq m$. Taking \eqref{2-diff}'s value at $x_0$ results in
\begin{eqnarray*}
0=u_{ii}\lambda_1+2u_ib_{11; i}+u
\varphi_{ii}-2\beta[u^{2}_{i}+uu_{ii}+u_{li}^{2}+u_lu_{lii}]
\end{eqnarray*}
Thus, we obtain at $x_0$ using Lemma \ref{key}
\begin{eqnarray*}
0&=&u_{ii}\lambda_1+2u_ib_{11; i}+u
\varphi_{ii}-2\beta[u^{2}_{i}+uu_{ii}+u_{li}^{2}+u_lu_{lii}]\\&\geq&(\lambda_i-u)\lambda_1+2u_i\frac{u_i}{u}(2\beta
\lambda_i-\lambda_1)+u \bigg(b_{11; ii}+2\sum_{l>m}\frac{(b_{1l;
i})^2}{\lambda_1-\lambda_l}\bigg)\\&&-2\beta[\lambda_i(\lambda_i-u)+u_lb_{ii;
l}]\\&\geq&\lambda_i\lambda_1-u\lambda_i+\frac{2u^{2}_{i}}{u}(2\beta
\lambda_i-\lambda_1)+u \bigg(b_{ii; 11}+2\sum_{l>m}\frac{(b_{1l;
i})^2}{\lambda_1-\lambda_l}\delta_{1i}\bigg)\\&&-2\beta[\lambda^{2}_{i}-u\lambda_i+u_lb_{ii;
l}],
\end{eqnarray*}
here we use the following Ricci identity (see \cite{Ham82} or (1.30)
in \cite{Chow06})
\begin{eqnarray*}
b_{ii;11}=b_{11;ii}-b_{11}+b_{ii}
\end{eqnarray*}
to get the last inequality.
Differentiating the equation \eqref{CM} shows
\begin{eqnarray*}
\sigma_{k}^{ii}b_{ii;1}=(p-1)u^{p-2}u_1.
\end{eqnarray*}
Differentiating it again
\begin{eqnarray*}
\sigma_{k}^{ii}b_{ii;11}&=&(p-1)u^{p-3}[uu_{11}+(p-2)u^{2}_{1}]-\sigma_{k}^{ij, st}b_{ij; 1}b_{st; 1}\\&=&(p-1)u^{p-3}[u\lambda_1-u^2+(p-2)u^{2}_{1}]-\sigma_{k}^{ij, st}b_{ij; 1}b_{st;1}.
\end{eqnarray*}
Due to the concavity of $\sigma^{\frac{1}{k}}_{k}(\lambda)$ (see (5)
in Proposition \ref{sigma}),
\begin{eqnarray*}
-\sum_{i\neq j}\sigma_{k-2}(\lambda|i j)b_{ii; 1}b_{jj;1}\geq-\frac{(k-1)(p-1)^2}{k}u^{p-3}u^{2}_{1},
\end{eqnarray*}
which results in together with Lemma \ref{Gao1} by noticing that the
forth and fifth terms in the right hand of the equation in Lemma
\ref{Gao1} are negative (this fact can be easily seen by Proposition
\ref{sigma} (6))
\begin{eqnarray*}
&&-\sigma_{k}^{ij, st}b_{ij; 1}b_{st;1}+2\sigma_{k}^{ii}\sum_{l>m}\frac{(b_{1l; i})^2}
{\lambda_1-\lambda_l}\\&\geq&-\frac{(k-1)(p-1)^2}{k}u^{p-3}u^{2}_{1}+2\sum_{i>m}\sigma_{k-1}(\lambda|i)\frac{(b_{11; i})^2}{\lambda_1-\lambda_i}.
\end{eqnarray*}
Then, we arrive at $x_0$ by noticing that $u_i(x_0)=0$ for $2\leq i\leq m$
\begin{eqnarray*}
0&=&\sigma_{k}^{ij}[u \cdot \varphi-\beta(u^2+|Du|^2)]_{ij}
\\&\geq&ku^{p-1}\lambda_1-ku^p+\sum_{i=1}^{n}\sigma_{k-1}(\lambda|i)\frac{2u^{2}_{i}}{u}(2\beta
\lambda_i-\lambda_1)\\&&+u
\bigg((p-1)u^{p-3}[u\lambda_{1}-u^2+(p-2)u^{2}_{1}]-\frac{(k-1)(p-1)^2}{k}u^{p-3}u^{2}_{1}\\&&+\frac{2}{u^2}\sum_{i>
m}\frac{\sigma_{k-1}(\lambda|i)}{\lambda_1-\lambda_i}u^{2}_{i}(2\beta\lambda_i-\lambda_1)^2\bigg)
\\&&-2\beta\bigg(\sum_{i=1}^{n}\sigma_{k-1}(\lambda|i)\lambda_{i}(\lambda_i-\lambda_1)+ku^{p-1}
\lambda_1-ku^p+(p-1)u^{p-2}\sum_{i=1}^{n}u^{2}_{i}\bigg)\\&\geq& u^{p-1}(k\lambda_1+(p-1)\lambda_1-2\beta k\lambda_1)\\&&+u^p(-k-(p-1)+2k\beta)+2\sum_{i=1}^{n}\beta\sigma_{k-1}(\lambda|i)\lambda_i(\lambda_1-\lambda_i)
\\&&+\frac{u^{2}_{1}}{u}\bigg(2(2\beta-1)\sigma_{k-1}(\lambda|1)\lambda_1+(p-1)(p-2)u^{p-1}\\&&-
\frac{(k-1)(p-1)^2}{k}u^{p-1}-2\beta(p-1)u^{p-1}\bigg)
\\&&+\frac{2}{u}\sum_{i> m}\sigma_{k-1}(\lambda|i)u^{2}_{i}(2\beta\lambda_i-\lambda_1)\frac{(2\beta-1)\lambda_i}{\lambda_1-\lambda_i}\\&&+
2\beta(1-p)u^{p-2}\sum_{i>m}u^{2}_{i}+2\sum_{i=1}^{n}\beta\sigma_{k-1}(\lambda|i)\lambda_i(\lambda_1-\lambda_i)
\\&\geq&u^{2}_{1}u^{p-2}\frac{2(k-1)(1-p)}{k}
+\frac{2}{u}\sum_{i>
m}\sigma_{k-1}(\lambda|i)u^{2}_{i}(2\beta\lambda_i-\lambda_1)\frac{(2\beta-1)\lambda_i}{\lambda_1-\lambda_i}\\&&+
2\beta(1-p)u^{p-2}\sum_{i>m}u^{2}_{i}+2\sum_{i=1}^{n}\beta\sigma_{k-1}(\lambda|i)\lambda_i(\lambda_1-\lambda_i)
\\&\geq& 0,
\end{eqnarray*}
where we use the following inequality to get the last but one inequality
\begin{eqnarray*}
\sigma_{k}(\lambda)\geq\lambda_1\sigma_{k-1}(\lambda|1),
\end{eqnarray*}
which can be easily proved in view of the assumption on the positive
definite of $u_{ij}+u\delta_{ij}$ and Proposition \ref{sigma}
(1)(2)(3). Thus, $\lambda_1(x_0)=\lambda_2(x_0)=...=\lambda_n(x_0)$
and $|D u|(x_0)=0$.

\

\textbf{Step 2}:  we want to show that $\{x \in \mathbb{S}^n: W(x)=\max_{\mathbb{S}^n}W\}$ is an open set.

\
We define
\begin{eqnarray*}
Z(x)=u F(b_{ij})-n\beta(u^2+|Du|^2),
\end{eqnarray*}
where
\begin{eqnarray*}
F(b_{ij})=f(\lambda_1, \lambda_2, ..., \lambda_n)
=\sum_{i=1}^{n}\frac{n\sigma_{k-1}(\lambda|i)\lambda^{2}_{i}}{k\sigma_k}=
\frac{n}{k}[\sigma_1-(k+1)\frac{\sigma_{k+1}}{\sigma_k}],
\end{eqnarray*}
here the third equality is derived by Proposition \ref{sigma}(7).
Clearly, $f(\lambda_1, \lambda_2, ..., \lambda_n)$ is a
1-homogeneous convex function by Proposition \ref{sigma} (1)(5) and
satisfies $f(1, 1, ..., 1)=n$ , since $u$ is strictly convex. We
will prove for any $x_0 \in \{x \in \mathbb{S}^n:
W(x)=\max_{\mathbb{S}^n}W\}$, there exists a small neighborhood
$U(x_0)$ of $x_0$ such that
\begin{eqnarray*}
\sigma_{k}^{ij}Z_{ij}-\frac{2}{u}\sigma_{k}^{ij}u_i Z_j\geq 0
\end{eqnarray*}
and
\begin{eqnarray*}
Z(x_0)=\max_{U(x_0)}Z(x).
\end{eqnarray*}
Denoting by $f_i=\frac{\partial f}{\partial \lambda_i}$ and $f_{ij}=\frac{\partial^2 f}{\partial \lambda_i\partial \lambda_j}$.
Since $f_i(\lambda(x_0))=1$, we can choose $U(x_0)$ such that $f_i$ is positive in $U(x_0)$.
For any $x \in U(x_0)$, we choose a coordinate at $x$ such that
\begin{eqnarray*}
b_{ij}(x)=diag\{\lambda_1(x), ..., \lambda_n(x)\}.
\end{eqnarray*}
Then, we have at $x$
\begin{eqnarray*}
Z_{i}&=&u_{i} f+uF^{jl}b_{jl; i}-2n\beta u_i\lambda_i
\end{eqnarray*}
and
\begin{eqnarray*}
Z_{ii}&=&u_{ii}f+2u_i\sum_{l=1}^{n}f_lb_{ll; i}+uF^{jl, st}b_{jl; i}b_{st; i}+u \sum_{l=1}^{n}f_lb_{ll;ii}-2n\beta[u^{2}_{i}+uu_{ii}+u_{li}^{2}+u_lu_{lii}]
\\&=&\lambda_{i}f-u f+2u_i\bigg(\frac{Z_i}{u}+\frac{u_i}{u}(2n\beta\lambda_i-f)\bigg)+uF^{jl, st}b_{jl; i}b_{st; i}+u \sum_{l=1}^{n}f_lb_{ll;ii}\\&&-2n\beta[\lambda^{2}_{i}-u\lambda_{i}+u_{l}b_{ii;l}]
\\&=&\lambda_{i}f+u\bigg(-f+\sum_{l}f_l \lambda_{l}-\lambda_i \sum_{l}f_l\bigg)+2u_i\bigg(\frac{Z_i}{u}+\frac{u_i}{u}(2n\beta\lambda_i-f)\bigg)
\\&&+uF^{jl, st}b_{jl; i}b_{st; i}+u \sum_{l=1}^{n}f_l b_{ii;ll}-2n\beta[\lambda^{2}_{i}-u\lambda_{i}+\sum_{l=1}^{n}u_{l}b_{ii;l}]
\\&=&\lambda_{i}f-u\lambda_i \sum_{l=1}^{n}f_l+2u_i\bigg(\frac{Z_i}{u}+\frac{u_i}{u}(2n\beta\lambda_i-f)\bigg)
\\&&+uF^{jl, st}b_{jl; i}b_{st; i}
+u \sum_{l=1}^{n}f_l b_{ii;ll}-2n\beta[\lambda^{2}_{i}-u\lambda_{i}+\sum_{l=1}^{n}u_{l}b_{ii;l}]\\&\geq&\lambda_{i}f-u\lambda_i \sum_{l=1}^{n}f_l+2u_i\bigg(\frac{Z_i}{u}+\frac{u_i}{u}(2n\beta\lambda_i-f)\bigg)
\\&&+u \sum_{l=1}^{n}f_l b_{ii;ll}-2n\beta[\lambda^{2}_{i}-u\lambda_{i}+\sum_{l=1}^{n}u_{l}b_{ii;l}]
\end{eqnarray*}
in view of the Ricci identity (see \cite{Ham82} or (1.30) in
\cite{Chow06})
$$b_{ii;ll}=b_{ll;ii}-b_{ll}+b_{ii},$$ and we use the convexity of $f$ to get the last inequality.
Differentiating the equation \eqref{CM} shows

\begin{eqnarray*}
\sigma_{k}^{ii}b_{ii;l}=(p-1)u^{p-2}u_l.
\end{eqnarray*}
Differentiating it again
\begin{eqnarray*}
\sigma_{k}^{ii}b_{ii;ll}&=&(p-1)u^{p-3}[uu_{ll}+(p-2)u^{2}_{l}]-\sigma_{k}^{ij, pq}b_{ij; l}b_{pq;l}\\&=&(p-1)u^{p-3}[u\lambda_l-u^2+(p-2)u^{2}_{l}]-\sigma_{k}^{ij, pq}b_{ij; l}b_{pq;l}.
\end{eqnarray*}
Due to the concavity of $\sigma^{\frac{1}{k}}_{k}$ (see (5) in
Proposition \ref{sigma}),
\begin{eqnarray*}
\sigma_{k}^{ij, pq}b_{ij; l}b_{pq;l} \leq
\frac{k-1}{k}\frac{(\sigma_{k}^{ij}b_{ij; l})^2}{\sigma_k}
\end{eqnarray*}
and the positivity of $f_i$ in $U(x_0)$, we arrive
\begin{eqnarray}\label{Diff}
&& \sigma_{k}^{ij}Z_{ij}
\nonumber\\&\geq&ku^{p-1}f-ku^p\sum_{i=1}^{n}f_i+\frac{2}{u}\sigma_{k}^{ij}u_i Z_j
+\sum_{i=1}^{n}\sigma_{k-1}(\lambda|i)\frac{2u^{2}_{i}}{u}(2n\beta
\lambda_i-f)
\nonumber\\&&+u \bigg((p-1)u^{p-3}[u f-u^2\sum_{i=1}^{n}f_i+(p-2)\sum_{i=1}^{n}u^{2}_{i}f_i]
-\frac{k-1}{k}(p-1)^2u^{p-3}\sum_{i=1}^{n}u^{2}_{i}f_i\bigg)\nonumber\\&&-2n\beta\bigg(\sum_{i=1}^{n}\sigma_{k-1}(\lambda|i)
\lambda^{2}_{i}-ku^p+(p-1)u^{p-2}\sum_{i=1}^{n}u^{2}_{i}\bigg)
\nonumber\\&\geq& u^{p-1}(k f+(p-1)f-2\beta k f)
+u^p\bigg(-k\sum_{i=1}^{n}f_i-(p-1)\sum_{i=1}^{n}f_i+2n k\beta\bigg)+\frac{2}{u}\sigma_{k}^{ij}u_i
Z_j\nonumber\\&&+\sum_{i=1}^{n}u^{2}_{i}u^{p-2} \bigg(2\sigma_{k-1}(\lambda|i)(2n\beta
\lambda_i-f)u^{1-p}+(p-1)(p-2)f_i\nonumber\\&&-\frac{k-1}{k}(p-1)^2f_i+2n\beta(1-p)\bigg)
\nonumber\\&&+2\beta\bigg(k\sigma_k
f-n\sum_{i=1}^{n}\sigma_{k-1}(\lambda|i)\lambda^{2}_{i}\bigg)
\\&\geq&u^p\bigg(-k\sum_{i=1}^{n}f_i-(p-1)\sum_{i=1}^{n}f_i+2n k\beta\bigg)+\frac{2}{u}\sigma_{k}^{ij}u_i
Z_j\nonumber\\&&+\sum_{i=1}^{n}u^{2}_{i}u^{p-2} \bigg(2\sigma_{k-1}(\lambda|i)(2n\beta
\lambda_i-f)u^{1-p}+(p-1)(p-2)f_i\nonumber\\ \nonumber&&-\frac{k-1}{k}(p-1)^2f_i+2n\beta(1-p)\bigg).
\end{eqnarray}
At $x_0$, we have $\lambda_1=\lambda_2=...=\lambda_n$. Thus, at $x_0$
\begin{eqnarray*}
n\lambda_i=f \quad \mbox{and} \quad f_{i}(x_0)=1 \quad \forall 1\leq i\leq n.
\end{eqnarray*}
Thus,
\begin{eqnarray*}
2\sigma_{k-1}(\lambda|i)(2n\beta
\lambda_i-f)=2k(2\beta-1)u^{p-1}.
\end{eqnarray*}
So,
\begin{eqnarray*}
&&2\sigma_{k-1}(\lambda|i)(2n\beta
\lambda_i-f)u^{1-p}+(p-1)(p-2)f_i-\frac{k-1}{k}(p-1)^2f_i+2n\beta(1-p)
\\&=&\frac{(1-p)(n-1)(p+k-1)}{k}>0
\end{eqnarray*}
for $1>p>1-k$. Thus, there exists a small neighborhood $U(x_0)$ such that
\begin{eqnarray}\label{U-1}
&&u^{2}_{i}u^{p-2} \bigg(2\sigma_{k-1}(\lambda|i)(2n\beta
\lambda_i-f)u^{1-p}+(p-1)(p-2)f_i\nonumber\\&&-\frac{k-1}{k}(p-1)^2f_i+2n\beta(1-p)\bigg)
> 0.
\end{eqnarray}
Moreover, we obtain by (4) in Proposition \ref{sigma}
\begin{eqnarray*}
\sum_{i=1}^{n}\frac{\partial(\frac{\sigma_{k+1}}{\sigma_{k}})}
{\partial \lambda_i}\geq \frac{n-k}{k+1},
\end{eqnarray*}
which implies
\begin{eqnarray*}
\sum_{i=1}^{n}f_i\leq \frac{n}{k}[n-(k+1)\frac{n-k}{k+1}]=n.
\end{eqnarray*}
Thus, we have by $p>1-k$
\begin{eqnarray}\label{U-3}
u^p(-k\sum_{l}f_l-(p-1)\sum_{l}f_l+2n k\beta)=u^p(k+p-1)(n-\sum_{l}f_l)\geq 0.
\end{eqnarray}
Thus, combining \eqref{U-1} and \eqref{U-3}, we can find $U(x_0)$
such that
\begin{eqnarray*}
\sigma_{k}^{ij}Z_{ij}-\frac{2}{u}\sigma_{k}^{ij}u_i Z_j
\geq0.
\end{eqnarray*}
Since $f_i(\lambda_0)=1$, we can choose $U(\lambda_0)$ such that $f$
is increasing with each $\lambda_i$ in $U(\lambda_0)$, where
$\lambda_0=(\lambda_1(x_0), ..., \lambda_n(x_0))$. Then, we can
choose $U(x_0)$ such that $\{\lambda(x): x \in U(x_0)\}\subset
U(\lambda_0)$. So, we have
\begin{eqnarray*}
Z(x_0)=nW(x_0)\geq n W(x)\geq Z(x),
\end{eqnarray*}
which implies
\begin{eqnarray*}
Z(x_0)=\max_{U(x_0)}Z(x).
\end{eqnarray*}
Thus, we have by the strong maximum principle
\begin{eqnarray*}
Z(x_0)=Z(x)
\end{eqnarray*}
for any $x$ in $U(x_0)$, which implies
\begin{eqnarray*}
W(x_0)=W(x)
\end{eqnarray*}
for any $x$ in $U(x_0)$.
Thus, $W(x)\equiv \max_{\mathbb{S}^n}W$ for any $x \in \mathbb{S}^n$. So, $Du=0$ which implies $u=constant$.
Thus, we complete our proof.

\textbf{Acknowledgement:} Parts of this work were done, while the
author was visiting the mathematical institute of
Albert-Ludwigs-Universit\"{a}t Freiburg in Germany. He would like to
express his deep gratitude to Prof. Guofang Wang for invitation,
continuous support, encouragement, and some important suggestions on
this paper. He also thanks the mathematical institute of
Albert-Ludwigs-Universit\"{a}t Freiburg for its hospitality.
Moreover, he also thanks Prof. Chuanqiang Chen for some suggestions
on this paper. Lastly, he is grateful to the reviewer's valuable
comments, which is helpful for improving the manuscript.

\vspace{0.5 cm}

\vspace {1cm}

\end{document}